\documentclass[a4paper,12pt]{amsart}

\newtheorem{proposition}{Proposition}
\newtheorem{theorem}{Theorem}
\newtheorem{lemma}{Lemma}

\newtheorem{remark}{Remark}

\numberwithin{equation}{section}
\usepackage{amssymb}

\begin{document}

\title[Positive solutions to RD eqn for PP model]{Positive solutions to the reaction diffusion equations for prey-predator models with dormancy of predators}
\author[Novrianti, Sawada, Tsuge]{Novrianti{$^{\ast, \sharp}$}, O.~Sawada{$^\ast$} and N.~Tsuge{$^\dagger$}}
\address{$^\ast$~Applied Physics Course, Faculty of Engineering, Gifu University, Yanagido 1-1, Gifu, 501-1193, Japan}
\address{$^\dagger$~Department of Mathematics Education, Faculty of Education, Gifu University, Yanagido 1-1, Gifu, 501-1193, Japan}
\address{$^\sharp$~Corresponding author}
\email{x3912006@edu.gifu-u.ac.jp}
\subjclass[2010]{35K57, 35B50}
\keywords{reaction diffusion equation, prey-predator model, time-evolution operator}
\thanks{N. Tsuge's research is partially supported by Grant-in-Aid for Scientific Research (C) 17K05315, Japan.}

%
%

\begin{abstract}
The time-global unique solvability on the reaction diffusion equations for prey-predator models with density-dependent inhibitor and dormancy on predators is established. The crucial step of the proof is to construct time-local non-negative classical solutions. To do so, new successive approximation and theories of time-evolution operators are used. Due to the maximum principle, the solutions are extended time-globally. Via analysis on the corresponding ordinary differential equations, invariant regions and asymptotic behaviors of solutions are also investigated.
\end{abstract}

\maketitle

%
%

\section{introduction and main results}

\noindent We deal with the reaction diffusion equations in ${\mathbb R}^n$ for $n \in {\mathbb N}$:
\[
  ({\rm{LV}}) \,\,\, \left\{
  \begin{array}{ll}
    \displaystyle \partial_t u = \delta \Delta u + r \left( 1 - \frac{u}{k} \right) u - \gamma \frac{u v}{u+h}, & \\[9pt]
    \displaystyle \partial_t v = d \Delta v + \mu (u) \frac{u v}{u+h} + \alpha w - \theta v - \iota v - \beta v^2, & \\[9pt]
    \displaystyle \partial_t w = \nu (u) \frac{u v}{u+h}  + \theta v - \alpha w - \tilde \iota w. &
  \end{array}
  \right.
\]
This is a prey-predator model with dormancy of predators, see \cite{K15, KNO09}. Here, three variables $u = u (x, t)$, $v = v (x, t)$ and $w = w (x, t)$ stand for the unknown scalar functions at $x \in {\mathbb R}^n$ and $t > 0$ who denote densities of prey, active predator and dormant predator, respectively. We denote the diffusion coefficient of prey by $\delta$, the diffusion coefficient of active predator by $d$, the growth rate of prey by $r$, the capacity of prey by $k$, the mortality rate of prey by $\gamma$, the constant of foraging efficiency and handling time by $h$, the rate of awakening by $\alpha$, the rate of sleeping by $\theta$, the mortality rate of active predator by $\iota$, the mortality rate of dormant predator by $\tilde \iota$, the mortality rate by combats of active predators by $\beta$. Also, $\mu (u)$ and $\nu (u)$ are smooth positive functions of $u$ denoting growth rates of active and dormant predators, respectively. In \cite{K15}, $\mu$ is given as a sigmoid function $\mu (u) := \gamma (1+\tanh (\xi (u-\eta)))/2 \in (0, \gamma)$ with some constants $\xi$ and $\eta$; $\nu (u) := \gamma - \mu (u)$. We have used the notation of differentiation; $\partial_t := \partial / \partial t$ and $\Delta := \sum_{i=1}^n \partial_i^2$, where $\partial_i := \partial / \partial x_i$ for $i = 1, \ldots, n$.

By change of variables and constants, we can replace $\delta = 1$, $k = 1$, $r = 1$ and $\beta = 1$. For the simplicity of notation, we put $m := \theta + \iota$, $\rho := \alpha + \tilde \iota$, in addition, assume that $\mu$ and $\nu$ are positive constants independent of $u$. So, we consider the following initial value problem:
\[
  ({\rm{P}}) \,\,\, \left\{
  \begin{array}{ll}
    \partial_t u = \Delta u + (1 - u) u - \gamma u v / (u+h)  & {\text{in}} \,\, {\mathbb R}^n \!\times\! (0, \infty), \\[4pt]
    \partial_t v = d \Delta v + \mu u v / (u+h) + \alpha w - (m + v) v & {\text{in}} \,\, {\mathbb R}^n \!\times\! (0, \infty), \\[4pt]
    \partial_t w = \nu u v / (u+h) + \theta v - \rho w & {\text{in}} \,\, {\mathbb R}^n \!\times\! (0, \infty), \\[4pt]
    \big( u, v, w \big) \big|_{t=0} = \big( u_0, v_0, w_0 \big) & {\text{in}} \,\, {\mathbb R}^n.
  \end{array}
  \right.
\]

In \cite{KNO09}, the bifurcation between stability and instability of stationary solutions to (LV) was concerned with some specific parameters, associated with numerical investigation. In \cite{K15}, a numerical study of Turing instability on (LV) was done. Besides, in this paper, we focus into the mathematical theory for the existence of time-global non-negative unique classical solutions to (P), and the invariant region which includes the trivial solution $\big( 0, 0, 0 \big)$. We now state the main results.

%
%

\begin{theorem}\label{th}
Let $n \in {\mathbb N}$, $d$, $h > 0$, and let $m$, $\theta$, $\rho$, $\alpha$, $\gamma$, $\mu$, $\nu \geq 0$.
If $u_0, v_0 \in BUC ({\mathbb R}^n)$ and $w_0 \in BUC^1 ({\mathbb R}^n)$ are non-negative, then there exists a triplet $\big( u, v, w \big)$ of time-global unique classical solutions to {\rm{(P)}}.
\end{theorem}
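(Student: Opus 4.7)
The plan is to build a non-negative time-local classical solution by a tailored successive approximation and then to extend it globally via the parabolic maximum principle. The iteration is designed so that each nonlinearity is either absorbed into a potential multiplying the unknown, or kept as a manifestly non-negative source. Writing (P) in the form
\[
\partial_t u = \Delta u + c_1(u,v)\,u, \quad
\partial_t v = d\Delta v + c_2(u,v)\,v + \alpha w, \quad
\partial_t w = -\rho w + g(u,v),
\]
with $c_1(u,v) := 1 - u - \gamma v/(u+h)$, $c_2(u,v) := \mu u/(u+h) - m - v$, and $g(u,v) := \nu uv/(u+h) + \theta v$, I set $(u^0, v^0, w^0) := (u_0, v_0, w_0)$ and, inductively, define $u^{k+1}, v^{k+1}$ as the solutions of the \emph{linear} non-autonomous parabolic equations obtained by freezing $c_1, c_2$ and the source $\alpha w$ at the $k$-th iterate, and
\[
w^{k+1}(x,t) := e^{-\rho t} w_0(x) + \int_0^t e^{-\rho(t-s)} g(u^k, v^k)(x,s)\,ds.
\]
Existence and regularity of $u^{k+1}$ and $v^{k+1}$ are supplied by the theory of time-evolution operators on $BUC({\mathbb R}^n)$, since $c_j(u^k, v^k)$ are bounded, uniformly continuous in $x$, and continuous in $t$ whenever the previous iterates are.

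Non-negativity of the iterates then follows by induction: given $u^k, v^k, w^k \geq 0$, the parabolic maximum principle (after the standard shift $\tilde u^{k+1} := e^{-Mt} u^{k+1}$ with $M$ large) forces $u^{k+1}, v^{k+1} \geq 0$, while $w^{k+1} \geq 0$ is manifest from its integral representation. Since $u/(u+h) \leq 1$ on $u \geq 0$ and $c_1, c_2, g$ are Lipschitz on bounded non-negative sets, uniform $L^\infty$-bounds on the iterates hold on a short interval $[0,T]$ depending only on the data and the parameters. Subtracting consecutive iterates produces linear equations whose right-hand sides are controlled by previous differences through this Lipschitz structure; a Gronwall argument then gives a contraction in $BUC({\mathbb R}^n)$ on some $[0,T_*]$ and convergence to a limit $(u,v,w)$. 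Parabolic Schauder-type estimates applied to the limiting linearized equations upgrade $u, v$ to classical; the hypothesis $w_0 \in BUC^1({\mathbb R}^n)$ is exactly what is needed for $w$ to inherit a spatial derivative, since the third equation provides no diffusive smoothing.

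The main obstacle is arranging the iteration so that positivity preservation, well-posedness of each linear step, and contractivity hold simultaneously; the naive Picard iteration on the full Duhamel form produces local solutions but does not, by itself, preserve non-negativity at each step. Once the local solution is constructed, global extension is relatively straightforward: discarding the non-positive predation term in the first equation gives $\partial_t u - \Delta u \leq (1-u)u$, and the scalar maximum principle yields $\|u(t)\|_\infty \leq \max(1, \|u_0\|_\infty)$; inserting this into the second equation and exploiting the dissipative $-v^2$ term produces an a priori bound for $v$; finally, $w$ is bounded because at each fixed $x$ it satisfies a linear ODE in $t$ with bounded source and exponentially decaying homogeneous part. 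These a priori bounds preclude finite-time blow-up, and uniqueness in the class of classical solutions follows from a Gronwall estimate on the differences of two putative solutions. Combining local existence, non-negativity, uniqueness, and the global $L^\infty$-bounds yields the theorem.
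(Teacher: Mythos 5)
Your proposal follows essentially the same route as the paper: a successive approximation in which the nonlinearities are split into a potential multiplying the new iterate plus a manifestly non-negative source, so that the linear non-autonomous parabolic steps (handled by time-evolution operators on $BUC({\mathbb R}^n)$) preserve non-negativity via the maximum principle, followed by a contraction/Gronwall argument for convergence and uniqueness, and global extension by comparison with the logistic ODE for $u$, the dissipative $-v^2$ term for $v$, and the damped linear ODE for $w$. The one step you leave incomplete is the continuation argument. Because the $w$-equation has no diffusion, classical regularity of $v$ (whose source contains $\alpha w$) and hence the local existence time depend on $\|\partial_i w_0\|$, not only on the sup-norms of the data; this is precisely why $w_0\in BUC^1$ is assumed, as you note. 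Consequently, to iterate the local construction one must also bound $\|\partial_i w(t)\|$ a priori, and the $L^\infty$ bounds on $(u,v,w)$ alone do not preclude this quantity from blowing up and shrinking the existence time of successive steps to zero. The paper closes this loop (Proposition~\ref{d_iw}) by differentiating the Duhamel formula for $w$ and using the heat-kernel gradient estimates $\|\partial_i u(t)\|,\ \|\partial_i v(t)\|\leq C(N^2+N)(t^{-1/2}+t^{1/2})$, which yield $\|\partial_i w(t)\|\leq\|\partial_i w_0\|+C(N^4+N)(t^{1/2}+t^{3/2})$: this may grow in $t$ but is finite on every bounded interval, so the local solvability (with $T_0\geq C/(M^4+1)$) can be re-applied indefinitely. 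Adding this estimate would make your argument complete.
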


%
%

\begin{remark}\label{r1}{\rm
(i)~One can find at most five stationary constant states (solutions independent of $x$ and $t$), including the trivial solution and $\big( 1, 0, 0 \big)$. The trivial solution is always instable. Besides, the stabilities of non-trivial constant states depend on parameters; see Remark~\ref{r4}.

\noindent (ii)~Even if $\mu$ and $\nu$ are positive smooth functions of $u$, the same time-global solvability can be proved.

\noindent (iii)~When the initial data belong to $L^\infty$, one may get the same assertion, although there is a lack of continuity of solutions in $t$ at $t = 0$.
}\end{remark}

%
%

We will explain the strategy of the proof of Theorem~\ref{th}, briefly. Using the heat semigroups, (P) is written as the forms of integral equations:
\begin{align}
    u(t) & = e^{t \Delta} u_0 + \int_0^t e^{(t-s) \Delta} \left[ \left( 1 - u \right) u - \gamma \frac{u v}{u + h} \right] \! (s) \, ds, \label{int-u} \\
    v(t) & = e^{d t \Delta} v_0 + \int_0^t e^{d (t-s) \Delta} \left[ \mu\frac{u v}{u + h} + \alpha w - (m + v) v \right] \! (s) \, ds, \label{int-v} \\
    w(t) & = e^{-\rho t} w_0 + \int_0^t e^{-\rho (t-s)} \left[ \nu \frac{u v}{u + h} + \theta v \right] \! (s) \, ds. \label{int-w}
\end{align}
Although these forms are benefit to show the uniqueness and regularity of solutions, the non-negativity of solutions are not ensured, as long as one uses the standard successive approximation. Thus, we have to look for the other integral forms for proving the existence of non-negative solutions. To do so, we shall construct a triplet of the solutions $\big( u, v, w \big)$ as the limits of the following successive approximation:
\begin{align}
    u_{\ell + 1} (t) & = U_\ell (t, 0) u_0 + \int_0^t U_\ell (t, s) \left[ u_\ell \right] (s) \, ds, \label{iu2} \\
    v_{\ell + 1} (t) & = V_\ell (t, 0) v_0 + \int_0^t V_\ell (t, s) \left[ \mu \frac{u_\ell v_\ell}{u_\ell + h} + \alpha w_\ell \right] \! (s) \, ds, \label{iv2} \\
    w_{\ell + 1} (t) & = e^{-\rho t} w_0 + \int_0^t e^{-\rho (t-s)} \left[ \nu \frac{u_\ell v_\ell}{u_\ell + h} + \theta v_\ell \right] \! (s) \, ds \label{iw2}
\end{align}
for $\ell \in {\mathbb N}$, starting at
\begin{equation}\label{1st}
  u_1 (t) := e^{t \Delta} u_0, \quad v_1 (t) := e^{t (d  \Delta - m)} v_0 \quad {\text{and}} \quad w_1 (t) := e^{- \rho t} w_0.
\end{equation}
Here, $\big\{ U_\ell (t, s) \big\}_{t \geq s \geq 0}$ and $\big\{ V_\ell (t, s) \big\}_{t \geq s \geq 0}$ are time-evolution operators associated with $A_\ell := \Delta - u_\ell - \gamma v_\ell / (u_\ell + h)$ and $B_\ell := d \Delta - m  - v_\ell$ for regarding $u_\ell$, $v_\ell$ and $w_\ell$ as given non-negative functions, respectively. These approximation enable us to get non-negativities of $\big( u_\ell, v_\ell, w_\ell \big)$ for each $\ell \in {\mathbb N}$, as well as its limit $\big( u, v, w \big)$. The definition and estimates of time-evolution operators are given in Section~2.

On the other hand, it is rather standard to extend the obtained solutions time-globally, deriving a priori estimates of solutions. The key idea is to apply the maximum principle to the classical solutions. One may also investigate asymptotic behaviors of solutions, more precisely. Via analysis of solutions to the system of corresponding ordinary differential equations, the invariant sets are prescribed as follows.

%
%

\begin{theorem}\label{th2}
{\rm{(i)}}~Let $\overline v := \mu/(1+h) + \alpha (\nu + \theta + \theta h)/(\rho + \rho h) - m \leq 0$. If $u_0 \not\equiv 0$, then $\big( u, v, w \big) \to \big( 1, 0, 0 \big)$ as $t \to \infty$. Besides, if $u_0 \equiv 0$, then the solution tends to the trivial solution as $t \to \infty$.

\noindent {\rm{(ii)}}~Let $\overline v > 0$ and $\overline w := (\nu + \theta + \theta h) \overline v / (\rho + \rho h)$. For $\varepsilon > 0$, there exists a $T_\varepsilon \geq 0$ such that $\big( u, v, w \big) \in [ 0, 1 + \varepsilon ) \times [ 0, \overline v + \varepsilon ) \times [ 0, \overline w + \varepsilon )$ for $x \in {\mathbb R}^n$ and $t \geq T_\varepsilon$. Moreover, if $\big( u_0, v_0, w_0 \big) \in R := [ 0, 1 ] \times [ 0, \overline v ] \times [ 0, \overline w ]$, then $\big( u, v, w \big) \in R$ for $t > 0$.

\noindent {\rm{(iii)}}~Assume $\overline v > 0$ and
\begin{align*}
  \underline u & := (1-h)/2 + \sqrt{(1+h)^2 - 4 \gamma \overline v}/2 > 0, \\
  \underline v & := \mu \underline u \, / (\underline u + h) + \alpha \nu \underline u \, / (\rho \underline u + \rho h) + \alpha \theta / \rho - m > 0, \\
  \underline w & := \nu \underline u \, \underline v \, / (\rho \underline u + \rho h) + \theta \underline v \, / \rho > 0.
\end{align*}
Let $u$, $v$, $w \geq \underline c$ for $x \in {\mathbb R}^n$ at $t = \underline t \geq 0$ with some $\underline c > 0$. For $\varepsilon > 0$, there exists a $T_\varepsilon' \geq \underline t$ such that $\big( u, v, w \big) \in ( \underline u - \varepsilon, 1 + \varepsilon ) \times ( \underline v - \varepsilon, \overline v + \varepsilon ) \times ( \underline w - \varepsilon, \overline w + \varepsilon )$ for $x \in {\mathbb R}^n$ and $t \geq T_\varepsilon'$. Moreover, if $\big( u_0, v_0, w_0 \big) \in R_\natural := [ \underline u, 1 ] \times [ \underline v, \overline v ] \times [ \underline w, \overline w ]$, then $\big( u, v, w \big) \in R_\natural$ for $t > 0$.
\end{theorem}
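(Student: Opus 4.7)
The plan is to reduce each assertion to an invariant-region and asymptotic analysis of the corresponding kinetic ODE (system (P) with the Laplacians dropped) and to transfer the conclusions to (P) via the parabolic maximum/comparison principle, which is applicable since the reaction is quasi-monotone in the cross-coupling between $v$ and $w$ (the off-diagonal terms $\alpha w$, $\theta v$, and $\nu uv/(u+h)$ are non-negative) and Theorem~\ref{th} already provides the global non-negative classical solution. For the invariance of $R = [0,1] \times [0, \overline v] \times [0, \overline w]$ in (ii), I would check that the reaction vector field points inward on each face. The face $\{u=1\}$ contributes $(1-u)u - \gamma uv/(u+h) = -\gamma v/(1+h) \le 0$; on the face $\{v = \overline v\}$, under $u \le 1$ and $w \le \overline w$,
\[
\frac{\mu u \overline v}{u+h} + \alpha w - (m+\overline v)\overline v \;\le\; \frac{\mu \overline v}{1+h} + \alpha \overline w - (m+\overline v)\overline v \;=\; 0,
\]
the last equality being exactly the defining identity of $\overline v$ (using $\overline w/\overline v = (\nu+\theta+\theta h)/(\rho+\rho h)$). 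The face $\{w = \overline w\}$ is handled analogously via the defining identity of $\overline w$, and $\{u=0\}, \{v=0\}, \{w=0\}$ are controlled by the standard non-negativity preservation.

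For the asymptotic parts of (i) and (ii), I would construct scalar upper envelopes by successive reduction. The logistic inequality $\partial_t u \le \Delta u + (1-u)u$ and a comparison against $\dot \phi = (1-\phi)\phi$ give $\sup_x u(x,t) \to 1$, so for any $\varepsilon > 0$ there exists $T_1$ with $u \le 1+\varepsilon$ and $u/(u+h) \le 1/(1+h) + O(\varepsilon)$ for $t \ge T_1$. Inserting this into the $v$- and $w$-equations produces a two-dimensional cooperative ODE
\[
\dot V = \frac{\mu V}{1+h} + \alpha W - m V - V^2, \qquad \dot W = \frac{(\nu + \theta(1+h)) V}{1+h} - \rho W,
\]
whose unique non-negative equilibrium is $(0,0)$ if $\overline v \le 0$ and $(\overline v, \overline w)$ if $\overline v > 0$, and which is globally attracting on $[0,\infty)^2$ by a routine monotone-dynamics / Lyapunov argument. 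Componentwise comparison then yields $v, w \to 0$ in (i) and the attractor statement in (ii); part (i) for $u$ is recovered by reinserting $v \to 0$ into the $u$-equation, which becomes asymptotically logistic with unique positive equilibrium $1$ (provided $u_0 \not\equiv 0$). If $u_0 \equiv 0$ then $u \equiv 0$, reducing $(v,w)$ to a linear cooperative ODE whose origin is globally attracting under $\overline v \le 0$.

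The main obstacle lies in (iii): obtaining a \emph{positive} lower envelope, because the $u$-equation is anti-monotone in $v$, so the full system is not globally cooperative and a one-shot comparison does not close. Given the positivity hypothesis at $t = \underline t$ and the upper bounds already obtained, I would compare $u$ to the scalar ODE $\dot \phi = (1-\phi)\phi - \gamma \phi(\overline v + \varepsilon)/(\phi+h)$, whose largest positive equilibrium solves $(1-\phi)(\phi+h) = \gamma(\overline v+\varepsilon)$ and converges to $\underline u$ as $\varepsilon \downarrow 0$. The resulting lower bound on $u$ feeds into the $w$-equation to give a lower envelope, which in turn feeds into the $v$-equation and closes the loop, yielding the announced $\underline v$ and $\underline w$. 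The delicate point is proving that this alternating bootstrap (upper bound for $u$ against lower bounds for $(v,w)$) converges; I plan to handle it by a monotone-iteration argument on the pair of envelopes, exploiting the monotonicity of $\phi \mapsto \gamma \phi/(\phi+h)$ and of the cross-couplings to show that the iterates tend to $(\underline u, \underline v, \underline w)$ from below and to $(1, \overline v, \overline w)$ from above. Invariance of $R_\natural$ then follows from a face-by-face check identical in form to the one for $R$, using the defining equations of $\underline u, \underline v, \underline w$.
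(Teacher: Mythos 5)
Your treatment of part (ii) --- the only part the paper actually proves --- is essentially the paper's argument: invariance of $R$ by checking that the reaction field points inward on each face of the rectangle (using the defining identity $\mu\overline v/(1+h)+\alpha\overline w-(m+\overline v)\overline v=0$), and the absorbing-set statement by successive comparison with the logistic ODE for $u$ and the induced ODE for $(v,w)$. The paper phrases the face check as a first-touching-time contradiction and invokes Oleinik's argument to localize the touching point; since the domain is all of ${\mathbb R}^n$ the supremum need not be attained, so you should make that localization (or an equivalent Phragm\'en--Lindel\"of step) explicit rather than appealing to the rectangle-invariance theorem as if on a bounded domain. The paper omits proofs of (i) and (iii) entirely, so there your sketches are the more detailed ones.

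Two soft spots to repair. In (i), recovering $u\to 1$ by ``reinserting $v\to 0$'' and comparing with a logistic ODE from below requires $\inf_{x}u(\cdot,t_0)>0$ at some time $t_0$; for decaying data (e.g.\ compactly supported $u_0\not\equiv 0$) one has $\inf_x u(\cdot,t)=0$ for every $t$, the ODE subsolution starting at $0$ gives nothing, and convergence to $1$ can only be locally uniform, via a hair-trigger/spreading argument for KPP-type equations rather than a pure scalar comparison --- so state the topology of the convergence and supply that step. In (iii), the lower envelopes for $v$ and $w$ cannot be produced strictly one after the other starting from $w\ge 0$ (a positive lower bound for $v$ needs one for $w$ through $\alpha w$, and vice versa through $\nu uv/(u+h)+\theta v$, and $\mu\underline u/(\underline u+h)-m$ may be negative); run that stage as a single comparison with the two-dimensional cooperative $(v,w)$-subsystem with $u$ frozen at its lower envelope, after which your bootstrap does close and the face-by-face check for $R_\natural$ goes through exactly as you indicate.
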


The sets $R$ and $R_\natural$ are invariant regions. The reader may find another (narrower) invariant regions for each individual parameters. Theorem~\ref{th2} implies that an absorving set always exists in $R$ or $R_\natural$.

The authors believe that one can also obtain the similar results in several domains with suitable boundary conditions.

This paper is organized as follows. In Section~2, we shall define function spaces, and recall some properties of the heat semigroup and time-evolution operators. Section~3 will be devoted to show the time-local existence of non-negative unique classical solutions with non-negative initial data. We shall discuss the time-global solvability in Section~4, deriving a priori estimates of solutions and their derivatives. In Section~5, some invariant regions and asymptotic behaviors of solutions to (P) will be argued.

Throughout this paper, we denote positive constants by $C$ the value of which may differ from one occasion to another.

%
%


%
%

\section{semigroups and time-evolution operators}

\noindent In this section, we recall definition of function spaces and properties of the heat semigroup as well as time-evolution operators.

Let $n \in {\mathbb N}$, $1 \leq p < \infty$, and let $L^p := L^p ({\mathbb R}^n)$ be the space of all $p$-th integrable functions in ${\mathbb R}^n$ with the norm $\displaystyle \| f \|_p := \left( \int_{{\mathbb R}^n} |f(x)|^p dx \right)^{1/p}$. We often omit the notation of domain $({\mathbb R}^n)$, if no confusion occurs likely. We do not distinguish scalar valued functions and vector, as well as function spaces. Let $L^\infty$ be the space of all bounded functions with the norm $\| f \| := \| f \|_\infty := {\rm{ess}}.\sup_{x \in {\mathbb R}^n} | f(x) |$; $BUC$ as the space of all bounded uniformly continuous functions. For $k \in {\mathbb N}$, let $W^{k, \infty}$ be a set of all bounded functions whose $k$-th derivatives are also bounded.

In the whole space ${\mathbb R}^n$, for $\vartheta_0 \in L^\infty ({\mathbb R}^n)$, the heat equation
\[
  ({\rm{H}}) \,\, \left\{
  \begin{array}{ll}
    \displaystyle \partial_t \vartheta = \Delta \vartheta \, & {\text{in}} \quad {\mathbb R}^n \!\times\! (0, \infty), \\
    \vartheta|_{t=0} = \vartheta_0 \, & {\text{in}} \quad {\mathbb R}^n
  \end{array}
  \right.
\]
admits a time-global unique smooth solution
\begin{align*}
  \vartheta & := \vartheta (t) := \vartheta (x, t) := (e^{t \Delta} \vartheta_0) (x) := (H_t \ast \vartheta_0) (x) \\
  & := \int_{{\mathbb R}^n} (4 \pi t)^{-n/2} \exp(-|x-y|^2/4t) \vartheta_0 (y) dy
\end{align*}
in $C_w ((0, \infty); L^\infty ({\mathbb R}^n))$, that is, $\vartheta \in C([\tau, \infty); L^\infty ({\mathbb R}^n))$ for any $\tau > 0$. Here, $H_t := H_t (x) := (4 \pi t)^{-n/2} \exp(-|x|^2/4t)$ is the heat kernel. Since $\| H_t \|_1 = 1$ for $t > 0$, by Young's inequality we have $\| \vartheta (t) \|_\infty \leq \| \vartheta_0 \|_\infty$ for $t > 0$. In particular, if $\vartheta_0 (x) \geq 0$ for all $x \in {\mathbb R}^n$, then $\vartheta (x, t) \geq 0$ holds true for $x \in {\mathbb R}^n$ and $t > 0$; so-called the maximum principle. Furthermore, if additionally $\vartheta_0 \in BUC ({\mathbb R}^n)$ and $\vartheta_0 \not\equiv 0$, then $\vartheta (x, t) > 0$ for $x \in {\mathbb R}^n$ and $t > 0$; so-called the strong maximum principle. For $\vartheta_0 \in L^\infty ({\mathbb R}^n)$, there is a lack of the continuity of solutions to (H) in time at $t = 0$, in general. Note that $e^{t \Delta} \vartheta_0 \to \vartheta_0$ in $L^\infty$ as $t \to 0$, if and only if $\vartheta_0 \in BUC ({\mathbb R}^n)$. The reader may find its proof in e.g. \cite{GIM99}. Indeed, if $\vartheta_0 \in BUC ({\mathbb R}^n)$, then $\vartheta \in C([0, \infty); BUC ({\mathbb R}^n))$.

One can easily see that for $j \in {\mathbb N}$, there exists a positive constant $C$ such that $\| \partial_i^j e^{t \Delta} \vartheta_0 \|_\infty \leq C t^{-j/2} \| \vartheta_0 \|_\infty$ for $t > 0$ and $1 \leq i \leq n$. So, $\vartheta (t) \in C^j ({\mathbb R}^n)$ for $j \in {\mathbb N}$ and $t >0$, which implies that $\vartheta (t) \in C^\infty ({\mathbb R}^n)$ for $t >0$. Moreover, $\vartheta \in C^\infty ({\mathbb R}^n \times (0, \infty))$ by using (H).

In what follows, we recall some properties and estimates for time-evolution operators. Consider the following autonomous problem:
\[
  ({\rm{P_A}}) \,\, \left\{
  \begin{array}{ll}
    \partial_t \varphi = d \Delta \varphi - \psi (x, t) \varphi \,
    & {\text{in}} \quad {\mathbb R}^n \!\times\! (0, \infty), \\
    \varphi|_{t=0} = \varphi_0 \, & {\text{in}} \quad {\mathbb R}^n.
  \end{array}
  \right.
\]
Here, $\psi (x, t)$ is a given bounded function. We establish the time-local solvability of $({\rm{P_A}})$ with upper bounds of $\varphi (t)$.

%
%

\begin{lemma}[\cite{KNST19}]\label{lem}
Let $n \in {\mathbb N}$, $d, T > 0$ and $\psi \in L^\infty ([0, T]; W^{1, \infty} ({\mathbb R}^n))$. If $\varphi_0 \in BUC ({\mathbb R}^n)$, then there exist a $T_\ast \in (0, T]$ and a time-local unique classical solution to $({\rm{P_A}})$, having $\displaystyle \| \varphi (t) \|_\infty \leq \frac{4}{3} \| \varphi_0 \|_\infty$ for $t \in [0, T_\ast]$. Moreover, if $\varphi_0 \geq 0$, then $\varphi \geq 0$.
\end{lemma}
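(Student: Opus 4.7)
\medskip

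\noindent\textbf{Plan.} My plan is to rewrite $({\rm P_A})$ in the mild form
\[
  \varphi(t) = e^{d t \Delta} \varphi_0 - \int_0^t e^{d (t-s) \Delta} \bigl[ \psi(\cdot, s)\, \varphi(\cdot, s) \bigr] ds,
\]
and to construct a fixed point by contraction in the closed ball
\[
  X_{T_\ast} := \Bigl\{ \varphi \in C \bigl( [0, T_\ast]; BUC ({\mathbb R}^n) \bigr) : \sup_{0 \leq t \leq T_\ast} \| \varphi (t) \|_\infty \leq \tfrac{4}{3} \| \varphi_0 \|_\infty \Bigr\}.
\]
Because $\| e^{d t \Delta} f \|_\infty \leq \| f \|_\infty$ and $\| \psi \varphi \|_\infty \leq \| \psi \|_\infty \| \varphi \|_\infty$, the right-hand side defines a map $\Phi$ with $\| \Phi[\varphi] (t) \|_\infty \leq \| \varphi_0 \|_\infty + T_\ast \| \psi \|_\infty \cdot \tfrac{4}{3} \| \varphi_0 \|_\infty$ and $\| \Phi[\varphi_1] - \Phi[\varphi_2] \|_\infty \leq T_\ast \| \psi \|_\infty \| \varphi_1 - \varphi_2 \|_\infty$. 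Choosing $T_\ast := \min \{ T, 1 / (4 \| \psi \|_\infty) \}$ makes $\Phi$ a self-contraction on $X_{T_\ast}$, producing a unique mild solution satisfying the stated bound $\| \varphi (t) \|_\infty \leq \tfrac{4}{3} \| \varphi_0 \|_\infty$.

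I would then upgrade this mild solution to a classical one. Using $\| \partial_i e^{d t \Delta} f \|_\infty \leq C t^{-1/2} \| f \|_\infty$ and differentiating the Duhamel formula once in $x$, combined with the spatial Lipschitz hypothesis $\psi \in L^\infty ([0, T]; W^{1,\infty} ({\mathbb R}^n))$, yields $\varphi (t) \in BUC^1$ for $t \in (0, T_\ast]$ with bounds that are integrable in time near $s = t$. Iterating the argument once more gives $\varphi (t) \in BUC^2$, after which the equation itself furnishes $\partial_t \varphi = d \Delta \varphi - \psi \varphi$ pointwise and continuously on ${\mathbb R}^n \!\times\! (0, T_\ast]$; continuity at $t = 0$ in $BUC$ is inherited from $e^{d t \Delta} \varphi_0 \to \varphi_0$.

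I expect the main obstacle to be the non-negativity, since the kernel $-\psi \varphi$ in the Duhamel iteration has no definite sign and a naive Picard scheme does not propagate $\varphi_0 \geq 0$. I would bypass this by setting $K := \| \psi \|_{L^\infty ([0, T] \times {\mathbb R}^n)}$ and $\tilde \varphi (x, t) := e^{K t} \varphi (x, t)$, which solves
\[
  \partial_t \tilde \varphi = d \Delta \tilde \varphi + (K - \psi) \tilde \varphi, \qquad \tilde \varphi |_{t = 0} = \varphi_0,
\]
with the multiplier $K - \psi \geq 0$. The equivalent integral equation
\[
  \tilde \varphi (t) = e^{d t \Delta} \varphi_0 + \int_0^t e^{d (t - s) \Delta} \bigl[ (K - \psi)\, \tilde \varphi \bigr] (s) \, ds
\]
is then solved by Picard iteration starting from $\tilde \varphi^{(0)} \equiv 0$; because $e^{d t \Delta}$ preserves non-negativity and $K - \psi \geq 0$, induction yields $\tilde \varphi^{(j)} \geq 0$ for every $j$, and uniqueness identifies the limit with $\tilde \varphi$. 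Therefore $\varphi = e^{-K t} \tilde \varphi \geq 0$, finishing the proof.
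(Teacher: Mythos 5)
Your existence argument is essentially the paper's: both rewrite $({\rm P_A})$ in Duhamel form and run a Picard/contraction scheme in $C([0,T_\ast];BUC)$, landing on the same $\tfrac{4}{3}\|\varphi_0\|_\infty$ bound with $T_\ast \sim 1/\|\psi\|_\infty$ (consistent with the paper's remark that $T_\ast \geq C/L$), and the regularity upgrade by differentiating the Duhamel formula and using $\psi \in L^\infty W^{1,\infty}$ is the standard bootstrapping the paper leaves implicit. The one place you genuinely diverge is the non-negativity: the paper disposes of it in one line by applying the maximum principle to the limiting classical solution, whereas you correctly observe that the naive iteration does not propagate positivity (the kernel $-\psi\varphi$ has no sign) and instead tilt by $e^{Kt}$ so that the zero-order coefficient $K-\psi$ becomes non-negative, after which positivity of the Picard iterates starting from $0$ is preserved by $e^{dt\Delta}$ and passes to the limit by uniqueness. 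This is a valid, self-contained substitute for the maximum principle and arguably more faithful to the integral-equation framework; the only detail worth adding is that the tilted iteration may contract only on a possibly shorter subinterval, so you should note that the argument is restarted finitely many times to cover all of $[0,T_\ast]$.
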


\begin{proof}
Although the proof is written in \cite{KNST19}, we give it in here. The idea is to use the standard iteration. Let $\varphi_1 (t) := e^{d t \Delta} \varphi_0$, and let
\[
  \varphi_{\ell + 1} (t) := e^{d t \Delta} \varphi_0 - \int_0^t e^{d (t - s) \Delta} \left[ \psi \varphi_\ell \right] (s) \, ds
\]
for each $\ell \in {\mathbb N}$, successively. It is easy to see that $\displaystyle \| \varphi_\ell (t) \|_\infty \leq \frac{4}{3} \| \varphi_0 \|_\infty$ for $t \in [0, T_\ast]$ with some $T_\ast > 0$ (independent of $\ell$) and $\ell \in {\mathbb N}$. One may easily show $\big\{ \varphi_\ell \big\}_{\ell = 1}^\infty$ is a Cauchy sequence in $C([0, T_\ast]; BUC ({\mathbb R}^n))$. So, its limit $\varphi := \lim_{\ell \to \infty} \varphi_\ell$ exists, and satisfies $({\rm{P_A}})$, having the estimate $\displaystyle \| \varphi (t) \|_\infty \leq \frac{4}{3} \| \varphi_0 \|_\infty$ for $t \in [0, T_\ast]$. It is rather straightforward to obtain the uniqueness and regularity of $\varphi$. Moreover, the non-negativity of $\varphi$ easily follows from the maximum principle.
\end{proof}

Note that if $\| \varphi_0 \| \leq L$ and $\sup_{0 \leq t \leq T} \| \psi (t) \| \leq L$ with some $L > 0$, then we may derive the estimate $T_\ast \geq C/L$ with $C > 0$.

The solution to $({\rm{P_A}})$ can be rewritten as $\varphi (t) = U (t, 0) \varphi_0$, using time-evolution operators $\big\{ U (t, s) \big\}_{t \geq s \geq 0}$ associated with $A := A(x, t) := d \Delta - \psi (x, t)$; see in e.g. the book of Tanabe \cite{Tanabe79}. The boundedness of solutions $\varphi$ implies that $\| U (t, 0) \|_{L^\infty \to L^\infty} \leq 4/3$ for $t \in [0, T_\ast]$, and then $\| U (t, s) \|_{L^\infty \to L^\infty} \leq 4/3$ for $0 \leq s \leq t \leq T_\ast$. Here, we have used the notation of an operator-norm $\| \mathcal O \|_{X \to Y} := \sup_{x \in X} \| {\mathcal O} x \|_Y / \| x \|_X$.

%
%

\section{time-local solvability}

\noindent We shall give a proof of the time-local solvability on (P) in this section. Let us denote by $\| \cdot \| := \| \cdot \|_\infty$.

%
%

\begin{proposition}\label{tlwp}
Assume that $n \in {\mathbb N}$, $d > 0$, and that other parameters are non-negative. Let $u_0$, $v_0 \in BUC ({\mathbb R}^n)$ and $w_0 \in BUC^1 ({\mathbb R}^n)$. Put $M := \max \{ \| u_0 \|, \| v_0 \|, \| w_0 \|, \| \partial_i w_0 \| \}$. If $u_0$, $v_0$ and $w_0$ are non-negative, then there exist a positive time $T_0$ and a triplet $\big( u, v, w \big)$ of time-local unique classical solutions to {\rm{(P)}} in $C([0, T_0]; BUC ({\mathbb R}^n))$, having $0 \leq u (x, t), v (x, t), w (x, t) \leq 2 M$ for $x \in {\mathbb R}^n$ and $t \in [0, T_0]$. Furthermore, $T_0 \geq C/(M^4+1)$ with some $C > 0$ independent of $M$.
\end{proposition}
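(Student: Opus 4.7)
The plan is to construct $(u,v,w)$ as the $BUC$-limit of the iterates $(u_\ell,v_\ell,w_\ell)$ defined by \eqref{iu2}--\eqref{iw2} starting from \eqref{1st}, using Lemma~\ref{lem} inductively to produce the time-evolution operators $U_\ell(t,s)$ and $V_\ell(t,s)$ associated with $A_\ell = \Delta - u_\ell - \gamma v_\ell/(u_\ell+h)$ and $B_\ell = d\Delta - m - v_\ell$ at each stage. The iteration is tailored so that non-negativity propagates: if $u_\ell, v_\ell, w_\ell \geq 0$, then the forcings $u_\ell$, $\mu u_\ell v_\ell/(u_\ell+h) + \alpha w_\ell$, and $\nu u_\ell v_\ell/(u_\ell+h) + \theta v_\ell$ are all non-negative, and since Lemma~\ref{lem} produces evolution operators preserving non-negativity (via the maximum principle for $({\rm P_A})$), the next iterate is non-negative as well. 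To apply Lemma~\ref{lem} one needs the zero-order coefficients of $A_\ell$ and $B_\ell$ to lie in $L^\infty([0,T]; W^{1,\infty})$, so first-order spatial derivatives must also be propagated through the iteration; for $u_\ell$ and $v_\ell$ this is done with the smoothing bound $\|\partial_i U_\ell(t,s)\|_{L^\infty\to L^\infty} \leq C(t-s)^{-1/2}$ inherited from the heat kernel, and for $\partial_i w_\ell$ one differentiates \eqref{iw2} directly in $x$, using $w_0 \in BUC^1$.

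The first estimate to establish is the uniform bound. Applying $\|U_\ell(t,s)\|_{L^\infty\to L^\infty} \leq 4/3$ and $\|V_\ell(t,s)\|_{L^\infty\to L^\infty} \leq 4/3$ from Lemma~\ref{lem}, which hold on a common interval of length $\geq C/M$ since all coefficients are controlled by $CM$, a direct estimation of \eqref{iu2}--\eqref{iw2} yields $\|u_{\ell+1}\|, \|v_{\ell+1}\|, \|w_{\ell+1}\|, \|\partial_i w_{\ell+1}\| \leq \tfrac{4}{3}M + C M t$ on $[0,T_0]$. Choosing $T_0$ small enough that the right-hand side is $\leq 2M$ closes the induction. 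Thus non-negativity and the $2M$ bound hold uniformly in $\ell$ on $[0,T_0]$.

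The main obstacle is the Cauchy estimate, since the operators $U_\ell$ and $V_\ell$ themselves depend on $\ell$, so one cannot merely subtract \eqref{iu2}--\eqref{iw2} at successive levels. The remedy is the Duhamel-type identity
\[
  U_\ell(t,s) - U_{\ell-1}(t,s) = \int_s^t U_\ell(t,\sigma)\bigl(A_{\ell-1}(\sigma) - A_\ell(\sigma)\bigr) U_{\ell-1}(\sigma,s)\,d\sigma,
\]
and its analog for $V_\ell$, together with $\|A_\ell - A_{\ell-1}\| \leq C(1+M/h)(\|u_\ell - u_{\ell-1}\| + \|v_\ell - v_{\ell-1}\|)$, which uses the $2M$ bound and $u_\ell + h \geq h > 0$. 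Decomposing $u_{\ell+1} - u_\ell$ into a forcing-difference piece and an operator-difference piece, carrying out the parallel bookkeeping for the $v$ and $w$ lines, and collecting worst-case terms of the form (operator difference) $\times$ (data of size $M$) $\times$ (forcing of size $M$), one arrives at
\[
  X_{\ell+1}(T_0) \leq C(M^4+1)\,T_0\,X_\ell(T_0), \qquad X_\ell(T_0) := \sup_{[0,T_0]}\bigl(\|u_\ell-u_{\ell-1}\| + \|v_\ell-v_{\ell-1}\| + \|w_\ell-w_{\ell-1}\|\bigr),
\]
the quartic $M^4$ coming from iterating such $M$-factors against each other and against the factor $1/h$ in the Lipschitz bound for $v/(u+h)$. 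Choosing $T_0 \geq C/(M^4+1)$ with a small enough $C$ makes this a contraction, so $\{(u_\ell,v_\ell,w_\ell)\}$ converges in $C([0,T_0]; BUC({\mathbb R}^n))$. The limit $(u,v,w)$ inherits non-negativity and the $2M$ bound; passing to the limit in \eqref{iu2}--\eqref{iw2} and invoking standard parabolic regularity of the heat semigroup upgrades $u, v$ to classical solutions of the first two equations of (P), while the ODE structure of \eqref{iw2} combined with $w_0 \in BUC^1$ gives $w \in C^{1,1}$, yielding the third; uniqueness follows from the same contraction-type estimate applied to any two potential solutions.
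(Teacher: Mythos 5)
Your proposal follows essentially the same route as the paper: the same successive approximation \eqref{iu2}--\eqref{iw2} seeded by \eqref{1st}, non-negativity propagated through the evolution operators of Lemma~\ref{lem}, uniform $2M$ bounds (including the weighted first-derivative bounds needed for the $\partial_i w$ line) by induction on $\ell$, and a Cauchy/contraction argument on $[0,T_0]$ with $T_0\gtrsim (M^4+1)^{-1}$, followed by uniqueness via the integral forms. The one place you add substance is the Cauchy step, where your Duhamel identity for $U_\ell(t,s)-U_{\ell-1}(t,s)$ supplies the detail the paper only asserts; note that in the paper the binding $M^4$ constraint actually comes from the $\partial_i w_{\ell+1}$ estimate (an $M^3 s^{-1/2}$ integrand forcing $t^{1/2}\lesssim M^{-2}$) rather than from the contraction constant, but both yield the same order of $T_0$.
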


\begin{proof}
For the sake of simplicity, we assume that all parameter is positive. Making the approximation sequences, we begin with (\ref{1st}). For $\ell \in {\mathbb N}$, we successively define $u_{\ell + 1}$, $v_{\ell + 1}$ and $w_{\ell + 1}$ by $(\ref{iu2})-(\ref{iw2})$. So, $\big( u_{\ell + 1}, v_{\ell + 1}, w_{\ell + 1} \big)$ formally satisfies
\[
  \left\{
  \begin{array}{ll}
  \partial_t u_{\ell + 1} = \Delta u_{\ell + 1} + \left( 1 - u_{\ell + 1} \right) u_\ell - \gamma u_{\ell + 1} v_\ell / (u_\ell+h), & \\[4pt]
  \partial_t v_{\ell + 1} = d \Delta v_{\ell + 1} + \mu u_\ell v_\ell / (u_\ell + h) + \alpha w_\ell - (m + v_\ell) v_{\ell + 1}, & \\[4pt]
  \partial_t w_{\ell + 1} = \nu u_\ell v_\ell / (u_\ell + h) + \theta v_\ell - \rho w_{\ell + 1}, & \\[4pt]
  \big( u_{\ell + 1}, v_{\ell + 1}, w_{\ell + 1} \big) \big|_{t=0} = \big( u_0, v_0, w_0 \big) &
  \end{array}
  \right.
\]
for $x \in {\mathbb R}^n$ and $t >0$ with non-negative functions $u_0$, $v_0$, $w_0$, $u_\ell$, $v_\ell$, $w_\ell$.

In what follows, we estimate $u_\ell$, $v_\ell$, $w_\ell$, $\partial_i u_\ell$, $\partial_i v_\ell$ and $\partial_i w_\ell$. Put
\[
  \begin{array}{ll}
  K_{1, \ell} := \sup_{0 \leq t \leq T} \| u_\ell (t) \|, & K_{2, \ell} := \sup_{0 \leq t \leq T} \| v_\ell (t) \|, \\[5pt]
  K_{3, \ell} := \sup_{0 \leq t \leq T} \| w_\ell (t) \|, & K_{4, \ell} := \sup_{0 \leq t \leq T} t^{1/2} \| \partial_i u_\ell (t) \|, \\[5pt]
  K_{5, \ell} := \sup_{0 \leq t \leq T} (dt)^{1/2} \| \partial_i v_\ell (t) \|, & K_{6, \ell} := \sup_{0 \leq t \leq T} \| \partial_i w_\ell (t) \|
  \end{array}
\]
for $T > 0$, $\ell \in {\mathbb N}$ and $1 \leq i \leq n$. To derive uniform estimates we argue the induction of $\ell$, taking $T$ small.

%
%

\vspace{2pt} \noindent {\underline{$\ell = 1$}} For $0 \leq u_0(x), v_0(x), w_0(x) \leq M$, by the maximum principle and the fact that $e^{t(d \Delta -m)} = e^{-mt} e^{dt \Delta}$, we easily see that
\[
  0 \leq u_1 (x, t) \leq \| u_0 \|, \quad 0 \leq v_1 (x, t) \leq \| v_0 \|, \quad 0 \leq w_1 (x, t) \leq \| w_0 \|
 \]
for ${\mathbb R}^n$ and $t > 0$ by $m, \rho > 0$. In addition, it is also easy to obtain that
\[
  t^{1/2} \| \partial_i u_1 (t) \| \leq \| u_0 \|, \quad (dt)^{1/2} \| \partial_i v_1 (t) \| \leq \| v_0 \|, \quad \| \partial_i w_1 (t) \| \leq \| \partial_i w_0 \|
\]
for $t > 0$ and $1 \leq i \leq n$ by the estimate of the heat kernel. Thus,
\begin{equation}\label{k1}
  K_{j, 1} \leq M \quad \text{for} \quad T > 0, \quad 1 \leq j \leq 6 \quad \text{and} \quad 1 \leq i \leq n.
\end{equation}

%
%

\vspace{2pt} \noindent {\underline{$\ell = 2$}} Before estimating $u_2$ and $v_2$, we shall confirm bounds for time-evolution operators $U_1$ and $V_1$. By $u_1 \geq 0$ and (\ref{k1}), it holds that
\[
  \| \eta_1 (t) \| \leq M + \frac{\gamma M}{h} =: \overline \eta_1 \quad {\text{with}} \quad \eta_1 (x, t) := u_1 (x, t) + \frac{\gamma v_1 (x, t)}{u_1 (x, t) + h}
\]
for $t > 0$. By Lemma~\ref{lem}, for $\big\{ U_1 (t, s) \big\}_{t \geq s \geq 0}$ with $A_1 (x, t) := \Delta - \eta_1 (x, t)$, we have $\displaystyle 0 \leq U_1 (t, s) u_0 \leq \frac{4}{3} \| u_0 \|$ for $x \in {\mathbb R}^n$ and $0 \leq s \leq t \leq T_2'$ with some $T_2' > 0$ depending only on $\overline \eta_1$. So, by (\ref{iu2}) with $\ell = 1$, we have
\[
  0 \leq u_2 (t) \leq \| U_1 (t, 0) u_0 \| + \int_0^t \| U_1 (t, s) \zeta_1(s) \| ds \leq 2 M
\]
with $\zeta_1 (x, t) := u_1 (x, t)$ and $0 \leq \zeta_1 (x, s) \leq \overline \zeta_1 := M$, provided if $0 \leq s \leq t \leq T_2^\dagger$ with $T_2^\dagger := \min \big\{ T_2', 1/2 \big\}$. Similarly, since
\[
  \| \xi_1 (t) \| \leq m + M =: \overline \xi_1 \quad {\text{with}} \quad \xi_1 (x, t) := m + v_1 (x, t)
\]
for $t > 0$, let $\big\{ V_1 (t, s) \big\}_{t \geq s \geq 0}$ be the time-evolution operator associated with $B_1 (x, t) := d \Delta - \xi_1 (x, t)$, we see that $\displaystyle 0 \leq V_1 (t, s) v_0 \leq \frac{4}{3} \| v_0 \|$ for $0 \leq s \leq t \leq T_2^\sharp$ with some $T_2^\sharp > 0$ depending only on $\overline \xi_1$. So, by (\ref{iv2}),
\[
  0 \leq v_2 (t) \leq \| V_1 (t, 0) v_0 \| + \int_0^t \| V_1 (t, s) \chi_1(s) \| ds \leq 2 M
\]
hold with $\chi_1 (x, t) := \mu u_1 (x, t) v_1 (x, t) / \{ u_1 (x, t) + h \} + \alpha w_1 (x, t)$ and $0 \leq \chi_1 (x, s) \leq \overline \chi_1 := (\mu M/h + \alpha) M$, provided if $0 \leq s \leq t \leq T_2^\flat$ with $T_2^\flat := \min \big\{ T_2^\dagger, T_2^\sharp, h/(2 \mu M + 2 \alpha h) \big\}$. For the estimate of $w_2$, we obtain
\[
  0 \leq w_2 (t) \leq \| e^{- \rho t} w_0 \| + \int_0^t e^{- \rho (t-s)} \| \nu u_1 v_1 / (u_1 + h) + \theta v_1 \| ds \leq 2 M
\]
for $0 \leq s \leq t \leq T_2^\natural$ with $T_2^\natural := \min \big\{ T_2^\flat, h/(\nu M + h \theta) \big\}$. To derive the estimate for $\partial_i u_2$, we use the heat semigroup expression:
\[
  u_2 (t) = e^{t \Delta} u_0 + \int_0^t e^{(t-s) \Delta} \left[ \zeta_1 - \eta_1 u_2 \right] (s) \, ds,
\]
rewriting (\ref{iu2}). Hence, it holds that
\[
  t^{1/2} \| \partial_i u_2 (t) \| \leq \| u_0 \| + t^{1/2} \int_0^t (t-s)^{-1/2} \left[ \overline \zeta_1 + \overline \eta_1 \| u_2 \| \right] ds \leq 2M
\]
for $t \in (0, T_2^\heartsuit]$ with $T_2^\heartsuit := \min \{ T_2^\natural, h / (2 h + 4 h M + 4 \gamma M) \}$. As similar way, for $\partial_i v_2$, we appeal to the heat semigroup expression again:
\begin{align*}
  & (dt)^{1/2} \| \partial_i v_2 (t) \| \\
  & \qquad \leq (dt)^{1/2} \| \partial_i e^{d t \Delta} v_0 \| + (dt)^{1/2} \int_0^t \| \partial_i e^{d (t-s) \Delta} \left[ \chi_1 - \xi_1 v_2 \right] \| ds \\
  & \qquad \leq \| v_0 \| + t^{1/2} \int_0^t (t-s)^{-1/2} \left[ \overline \chi_1 + \overline \xi_1 2 M \right] ds \leq 2M
\end{align*}
for $t \in (0, T_2^\diamondsuit]$ with $T_2^\diamondsuit := \min \{ T_2^\heartsuit, h / (2 \mu M + 2 \alpha h + 4 hm + 4 h M) \}$. Furthermore, we see that
\begin{align*}
  \partial_i w_2 (t) & = e^{- \rho t} \partial_i w_0 \\
  & \quad + \int_0^t e^{- \rho (t-s)} \left[ \frac{\nu h (\partial_i u_1) v_1 + \nu u_1 (\partial_i v_1) (u_1 + h)}{(u_1 + h)^2} + \theta \partial_i v_1 \right] ds
\end{align*}
holds true, and this implies that
\begin{align*}
  \| \partial_i w_2 (t) \| & \leq M + \int_0^t \left\{ \frac{\nu h \sqrt{d} M + \nu M (M + h)}{h^2} + \theta \right\} M (d s)^{-1/2} ds \\
  & \leq 2 M \quad {\text{for}} \quad t \in [0, T_2]
\end{align*}
with $T_2 := \min \{ T_2^\diamondsuit, d h^4/[4 \nu h \sqrt{d} M + 4 \nu M^2 + 4 \nu h M + 4 h^2 \theta]^2 \}$.

Therefore, it is shown that $u_2, v_2, w_2 \geq 0$ and
\begin{equation}\label{k2}
  K_{j, 2} \leq 2 M \quad \text{for} \quad t \in (0, T_2], \quad 1 \leq j \leq 6 \quad \text{and} \quad 1 \leq i \leq n.
\end{equation}

%
%

\vspace{2pt} \noindent {\underline{$\ell = 3$}} We stand for the time-evolution operator $\big\{ U_2 (t, s) \big\}_{t \geq s \geq 0}$ with $A_2 (x, t) := \Delta - \eta_2 (x, t)$ and $\eta_2 (x, t) := u_2 (x, t) + \gamma v_2 (x, t)/ \{ u_2 (x, t) + h \}$. By Lemma~\ref{lem}, $U_2 (t, s) u_0 \geq 0$ and $\| U_2 (t, s) \|_{L^\infty \to L^\infty} \leq 4/3$ for $0 \leq s \leq t \leq T_3'$ with some $T_3' > 0$, since $0 \leq \eta_2 (x, t) \leq \overline \eta := 2 M + 2 \gamma M/h$ by (\ref{k2}). So, we get
\[
  0 \leq u_3 (x, t) \leq \| U_2 (t, 0) u_0 \| + \int_0^t \| U_2 (t, s) \zeta_2 (s) \| ds \leq 2 M
\]
for $x \in {\mathbb R}^n$ and $t \in [0, T_3^\dagger]$ with $T_3^\dagger := \min \{ T_3', 1/4 \big\}$. Here we have used that $0 \leq \zeta_2 (x, t) := u_2 (x, t) \leq \overline \zeta := 2 M$. Similarly, denote the time-evolution operator by $\big\{ V_2 (t, s) \big\}_{t \geq s \geq 0}$ associated with $B_2 (x, t) := d \Delta - \xi_2 (x, t)$. Since $0 \leq \xi_2 (x, t) := m + v_2 (x, t) \leq \overline \xi := m + 2 M$, $V(t, s) v_0 \geq 0$ and $\| V_2 (t, s) \|_{L^\infty \to L^\infty} \leq 4/3$ holds for $0 \leq s \leq t \leq T_3^\sharp$ with some $T_3^\sharp > 0$ by Lemma~\ref{lem}. Hence, we can see that
\[
  0 \leq v_3 (x, t) \leq \| V_2 (t, 0) v_0 \| + \int_0^t \| V_2 (t, s) \chi_2 (s) \| ds \leq 2 M
\]
for $x \in {\mathbb R}^n$ and $t \in [0, T_3^\flat]$ with $T_3^\flat := \min \big\{ T_3^\dagger, T_3^\sharp, h/(8 \mu M + 4 \alpha h) \big\}$. Here we have used $0 \leq \chi_2 (x, t) := \mu u_2 (x, t) v_2 (x, t) / \{ u_2 (x, t) + h \} + \alpha w_2 (x, t) \leq \overline \chi := 4 \mu M^2/h + 2 \alpha M$ by (\ref{k2}). It is also easy to show that
\[
  0 \leq w_3 (x, t) \leq \| w_0 \| + \int_0^t \| \nu u_2 v_2 / (u_2 + h) + \theta v_2 \| ds \leq 2 M
\]
for $x \in {\mathbb R}^n$ and $t \in [0, T_3^\natural := \min \big\{ T_3^\flat, h/(4 \nu M + 2 h \theta) \big\}$. By the heat semigroup expression, we obtain that
\[
  t^{1/2} \| \partial_i u_3 (t) \| \leq \| u_0 \| + t^{1/2} \int_0^t (t-s)^{-1/2} \left[ \| \zeta_2 \| + \| \eta_2 u_3 \| \right] ds \leq 2M
\]
for $t \in (0, T_3^\heartsuit]$ with $T_3^\heartsuit := \min \{ T_3^\natural, h / (4 h + 8 h M + 8 \gamma M) \}$. As similar way, we derive
\[
  (dt)^{1/2} \| \partial_i v_3 (t) \| \leq \| v_0 \| + t^{1/2} \int_0^t (t-s)^{-1/2} \left[ \| \chi_2 \| + \| \xi_2 v_3 \| \right] ds \leq 2M
\]
for $t \in (0, T_3^\diamondsuit]$ with $T_3^\diamondsuit := \min \{ T_3^\heartsuit, h / (4 h m + 8 h M + 8 \mu M + 4 \alpha h) \}$. For estimate $\partial_i w_3$, we have
\begin{align*}
  \| \partial_i w_3 (t) \| & \leq M + \int_0^t \left\| \frac{\nu h (\partial_i u_2) v_2 + \nu u_2 (\partial_i v_2) (u_2 + h)}{h^2} + \theta \partial_i v_2 \right\| ds \\
  & \leq 2 M \quad {\text{for}} \quad t \in (0, T_0]
\end{align*}
with $T_0 := \min \{ T_3^\diamondsuit, d h^4/[8 \nu h \sqrt{d} M + 16 \nu M^2 + 8 \nu h M + 4 h^2 \theta]^2 \}$. Note that the estimate $T_0 \geq C/(M^4+1)$ is yielded with some $C > 0$.

Therefore, we see that $u_3, v_3, w_3 \geq 0$ and
\[
  K_{j, 3} \leq 2M \quad \text{for} \quad t \in (0, T_0], \quad 1 \leq i \leq n \quad {\text{and}} \quad 1 \leq j \leq 6.
\]

%
%

\vspace{2pt} \noindent {\underline{$\ell = 4, 5, \ldots$}} Let $\ell \geq 4$. We assume that $u_\ell$, $v_\ell$, $w_\ell \geq 0$ and
\begin{equation}\label{k-ell}
  K_{j, \ell} \leq 2 M \quad \text{for} \quad t \in (0, T_0], \quad 1 \leq j \leq 6 \quad {\text{and}} \quad 1 \leq i \leq n
\end{equation}
hold true. We shall compute estimates for $u_{\ell + 1}$, $v_{\ell + 1}$ and $w_{\ell + 1}$. Note that $\eta_\ell \leq \overline \eta$, $\zeta_\ell \leq \overline \zeta$, $\xi_\ell \leq \overline \xi$, $\chi_\ell \leq \overline \chi$ hold, independently of $\ell \geq 3$. So, as the same discussion in the case $\ell = 3$ in above, one can see that $u_{\ell+1}$, $v_{\ell+1}$, $w_{\ell+1} \geq 0$ and
\[
  K_{j, \ell+1} \leq 2 M \quad \text{for} \quad t \in (0, T_0], \quad 1 \leq j \leq 6 \quad {\text{and}} \quad 1 \leq i \leq n.
\]
The detail is omitted in here. Hence, the non-negativities of approximation and $(\ref{k-ell})$ hold true for all $\ell \in {\mathbb N}$.

One may see that $\big( u_\ell, v_\ell, w_\ell \big)$ are continuous in $t \in [0, T_0]$ for $\ell \in {\mathbb N}$. It is also easy to see that $\big\{ u_\ell, v_\ell, w_\ell, t^{1/2} \partial_i u_\ell, t^{1/2} \partial_i v_\ell, \partial_i w_\ell \big\}_{\ell = 1}^\infty$ are Cauchy sequences in $C([0, T_0]; BUC)$, choosing $T_0$ small again, if necessary. Let
\[
  \big( u, v, w, \hat u, \hat v, \hat w \big) := \lim_{\ell \to \infty} \big( u_\ell, v_\ell, w_\ell, t^{1/2} \partial u_\ell, t^{1/2} \partial v_\ell, \partial_i w_\ell \big).
\]
Obviously, the coincidences $\hat u = t^{1/2} \partial_i u$, $\hat v = t^{1/2} \partial_i v$ and $\hat w = \partial_i w$ hold by construction. Furthermore, it is also ensured that
\[
  0 \leq u (x, t), v (x, t), w (x, t) \leq 2M \quad {\text{for}} \,\,\, x \in {\mathbb R}^n \,\,\, {\text{and}} \,\,\, t \in [0, T_0].
\]

The uniqueness follows from $(\ref{int-u})-(\ref{int-w})$ and Gronwall's inequality, directly. If fact, if $\big( u, v, w \big)$ and $\big( u^\ast, v^\ast, w^\ast \big)$ are solutions to (P) with the same initial data $\big( u_0, v_0, w_0 \big)$, then $u \equiv u^\ast$, $v \equiv v^\ast$ and $w \equiv w^\ast$ simultaneously hold. Thanks to the boundedness of the first derivatives, it is easy to control the second derivatives in $x$ of $u$ and $v$ for $t \in (0, T_0]$, as well as the first derivatives in $t$ of solutions. So, we may see that $\big( u, v, w \big)$ is a triplet of time-local unique classical solutions to (P). This completes the proof of Proposition~\ref{tlwp}.
\end{proof}

%
%

\begin{remark}\label{r2}{\rm
(i)~If $w_0$ is smooth, then $\big( u, v, w \big)$ is smooth in $x$ and $t$.

\noindent (ii)~For $d = 0$, we can also get time-local well-posedness, if $v_0 \in BUC^1$.

\noindent (iii)~The instability of the trivial solution is easily obtained. Moreover, by strong maximum principle for solutions to the heat equation, $u > 0$ for $x \in {\mathbb R}^n$ and $t \in (0, T_0]$, if $u_0 \not\equiv 0$. This means that ${\rm{supp}} \, u(t) = {\mathbb R}^n$ for any small $t > 0$, even if ${\rm{supp}} \, u_0$ is compact. That is, the propagation speed of solutions to (P) is infinite, as the same as the heat equation. In addition, $v > 0$ and $w > 0$ for $t > 0$, if either $v_0 \not\equiv 0$ or $w_0 \not\equiv 0$.
}\end{remark}

%
%

\section{time-global well-posedness}

\noindent In this section, we will derive a priori bounds of solutions and their derivatives. To do so, our first task is to obtain upper bounds of solutions to (P) with large initial data. For the case when $\| u_0 \| \leq 1$, we will discuss in Remark~\ref{r3}-(ii) in below and Section~5.

%
%

\begin{proposition}\label{ub}
Suppose the assumption of Proposition~$\ref{tlwp}$. If $\kappa_0 := \| u_0 \| > 1$, then $0 < u (x, t) < \kappa_0$, $0 \leq v \leq \max \big\{ \| v_0 \|, \widetilde v \big\}$ and $0 \leq w \leq \max \big\{ \| w_0 \|, \widetilde w \big\}$ for $x \in {\mathbb R}^n$ and $t > 0$, as long as the classical solutions exist. Here, $\widetilde v := \mu \kappa_0 / (\kappa_0 + h) + \alpha (\nu \kappa_0 + \theta \kappa_0 + \theta h) / (\rho \kappa_0 + \rho h) - m$ and $\widetilde w := (\nu \kappa_0 + \theta \kappa_0 + \theta h) \widetilde v / (\rho \kappa_0 + \rho h)$.
\end{proposition}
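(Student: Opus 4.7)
The plan is to apply the parabolic maximum principle to each component of (P) in turn, exploiting the fact that the $u$-equation depends on $(v, w)$ only through a non-positive ``drain'' term, while the coupling within the $(v, w)$-subsystem is through linear cross-terms with non-negative coefficients. Non-negativity of the triplet is already provided by Proposition~\ref{tlwp}, so the task reduces to establishing the upper bounds.

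First I would bound $u$. Since $v \geq 0$, the reaction in the $u$-equation satisfies $(1-u) u - \gamma u v / (u + h) \leq (1 - u) u$, which is non-positive as soon as $u \geq 1$. Because $\kappa_0 > 1$, the constant $\kappa_0$ is a supersolution of the $u$-equation when the latter is viewed as a linear parabolic equation with bounded coefficient $u + \gamma v / (u + h)$ regarded as given; a standard comparison argument (or, equivalently, an argument tracking the spatial supremum of $u$) then yields $u(x,t) \leq \kappa_0$. Strict inequality $u < \kappa_0$ on $t > 0$ follows from the strong maximum principle applied to $\kappa_0 - u$, which satisfies a linear parabolic inequality of the form $(\partial_t - \Delta)(\kappa_0 - u) \geq -c(x,t)(\kappa_0 - u)$ for a suitable bounded coefficient $c$.

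Second, using $u \leq \kappa_0$ and the fact that $u \mapsto u / (u + h)$ is increasing on $[0, \infty)$, I would reduce the $(v, w)$-equations to the pointwise differential inequalities
\[
\partial_t v \leq d \Delta v + A v + \alpha w - (m + v) v, \qquad \partial_t w \leq C v - \rho w,
\]
where $A := \mu \kappa_0 / (\kappa_0 + h)$ and $C := (\nu \kappa_0 + \theta \kappa_0 + \theta h) / (\kappa_0 + h)$. A direct computation then confirms that $(\widetilde v, \widetilde w)$ is precisely the non-trivial equilibrium of the associated ODE system, satisfying $A \widetilde v + \alpha \widetilde w = (m + \widetilde v)\widetilde v$ and $C \widetilde v = \rho \widetilde w$.

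Third, I would construct a spatially constant supersolution pair $(V^*(t), W^*(t))$ by solving the cooperative ODE system $\dot V^* = A V^* + \alpha W^* - (m + V^*) V^*$, $\dot W^* = C V^* - \rho W^*$ with initial data $V^*(0) = \|v_0\|$ and $W^*(0) = \|w_0\|$. Because the cross-coupling coefficients $\alpha$ and $C$ are non-negative, a component-wise comparison principle combined with a Gronwall-type estimate gives $v(x,t) \leq V^*(t)$ and $w(x,t) \leq W^*(t)$ pointwise, for as long as the classical solution persists. The remaining, and main, obstacle is the purely ODE statement that the rectangle $R := [0, \max\{\|v_0\|, \widetilde v\}] \times [0, \max\{\|w_0\|, \widetilde w\}]$ is forward-invariant under the $(V^*, W^*)$-flow. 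I would verify this by inspecting the sign of the ODE vector field on each face of $\partial R$, using the equilibrium identities for $(\widetilde v, \widetilde w)$ and a short case analysis depending on which of $\|v_0\|$ vs.\ $\widetilde v$ and $\|w_0\|$ vs.\ $\widetilde w$ is larger.
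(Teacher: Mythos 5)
Your treatment of $u$ is fine and essentially the paper's (the paper compares with the logistic ODE $\kappa'=(1-\kappa)\kappa$, $\kappa(0)=\kappa_0$, whose solution is strictly below $\kappa_0$ for $t>0$, rather than invoking the strong maximum principle for the constant supersolution, but either route gives $0<u<\kappa_0$). The reduction of the $(v,w)$-equations to differential inequalities with $A=\mu\kappa_0/(\kappa_0+h)$ and $C=(\nu\kappa_0+\theta\kappa_0+\theta h)/(\kappa_0+h)$, the identification of $(\widetilde v,\widetilde w)$ as the equilibrium of the associated cooperative ODE system, and the component-wise comparison $v\le V^*$, $w\le W^*$ (legitimate because the off-diagonal couplings $\alpha$ and $C$ are non-negative) are all correct.

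The step that fails is precisely the one you defer to a ``short case analysis'': the rectangle $[0,\max\{\|v_0\|,\widetilde v\}]\times[0,\max\{\|w_0\|,\widetilde w\}]$ is \emph{not} forward-invariant for the $(V^*,W^*)$-flow in general. Suppose $\alpha>0$, $\|v_0\|\le\widetilde v$ and $\|w_0\|>\widetilde w$. At the corner $(V^*,W^*)=(\widetilde v,\|w_0\|)$ the first component of the vector field equals
\[
A\widetilde v+\alpha\|w_0\|-(m+\widetilde v)\widetilde v=\alpha\left(\|w_0\|-\widetilde w\right)>0,
\]
so trajectories exit through the face $V^*=\widetilde v$; symmetrically, if $\|v_0\|>\widetilde v$ and $\|w_0\|\le\widetilde w$, then $\dot W^*=CV^*-\rho\widetilde w>0$ on the face $W^*=\widetilde w$. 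Hence your sign check closes only when $\|v_0\|\le\widetilde v$ and $\|w_0\|\le\widetilde w$ (where $[0,\widetilde v]\times[0,\widetilde w]$ is genuinely invariant) or under an extra compatibility condition between $\|v_0\|$ and $\|w_0\|$. The obstruction is not an artifact of your method: for spatially constant data $u_0\equiv\kappa_0$, $v_0\equiv\widetilde v$, $w_0\equiv W_0>\widetilde w$ one computes $\partial_t v|_{t=0}=\alpha(W_0-\widetilde w)>0$, so $v$ immediately exceeds $\max\{\|v_0\|,\widetilde v\}$. The paper's own proof sidesteps the coupled invariance question by bounding $v$ with the \emph{scalar} ODE $\omega'=(\widetilde v-\omega)\omega$, obtained by replacing $\alpha w$ with the quasi-steady-state expression $\frac{\alpha}{\rho}(\nu\kappa_0\omega/(\kappa_0+h)+\theta\omega)$, and only afterwards bounding $w$ via the bound on $v$ --- but that substitution presupposes a control of $w$ by $v$ that is unjustified in exactly the regime where your invariance fails. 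To complete an argument along your lines you would need either an additional hypothesis linking $\|w_0\|$ to $\|v_0\|$, or a larger, genuinely invariant supersolution region (e.g.\ following $(V^*,W^*)$ until it enters $[0,\widetilde v]\times[0,\widetilde w]$), at the price of a weaker bound than the one stated.
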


\begin{proof}
If $v_0 \equiv 0$ and $w_0 \equiv 0$, then $v \equiv w \equiv 0$ for $t > 0$. Assume either $v_0 \not\equiv 0$ or $w_0 \not\equiv 0$. So, as seen in Remark~\ref{r2}-(iii), we have $u$, $v$, $w > 0$. The behavior of $u$ will be observed. Consider the logistic equation:
\begin{equation}\label{kappa}
  \kappa' = (1 - \kappa) \kappa, \quad \kappa (0) = \kappa_0 > 1.
\end{equation}
By maximum principle, $u (x, t) \leq \kappa (t)$ holds for $x \in {\mathbb R}^n$ and $t > 0$, as long as the classical $u$ exists. Since $\kappa (t) = \kappa_0 / (\kappa_0 + e^{-t} - \kappa_0 e^{-t}) < \kappa_0$ for $t > 0$, it is clear that $u < \kappa_0$.

Next, we investigate on $v$. One may assume $\widetilde v > 0$, without loss of generality. Furthermore, we assume $\omega_0 := \| v_0 \| > \widetilde v$. Let $\omega = \omega (t)$ be a solution to
\begin{equation}\label{omega}
  \omega' = \mu \frac{\kappa_0 \omega}{\kappa_0 + h} + \frac{\alpha}{\rho} \left( \nu \frac{\kappa_0 \omega}{\kappa_0 + h} + \theta \omega \right) - (m + \omega) \omega = (\widetilde v - \omega) \omega
\end{equation}
with $\omega (0) = \omega_0 > \widetilde v$. Since $\omega$ is monotone decreasing, $\widetilde v < \omega (t)  < \omega_0$ for $t > 0$. By maximum principle, $v (x, t) \leq \omega (t)$ holds for $x \in {\mathbb R}^n$ and $t > 0$. Here, we have used $0 < u/(u+h) \leq \kappa_0/(\kappa_0 + h)$ by $0 < u \leq \kappa_0$. This yields that $v < \omega_0$. If $\| v_0 \| \leq \widetilde v$, then it is easy to see that $v \leq \widetilde v$.

Since $u \leq \kappa_0$ and $v \leq \widetilde v$, one can easily see that $w \leq \widetilde w$, when $\| w_0 \| \leq \widetilde w$. Also, even if $\| w_ 0 \| > \widetilde w$, then $w < \| w_0 \|$ holds by the same observation above.
\end{proof}

%
%

\begin{remark}\label{r3}{\rm
(i)~By definition, it is clear that $\widetilde v \geq \overline v$ and $\widetilde w \geq \overline w$, if $\kappa_0 \geq 1$. Besides, $\widetilde v \leq \overline v$ and $\widetilde w \leq \overline w$, if $\kappa_0 \leq 1$.

\noindent (ii)~Even $\kappa_0 =  \| u_0 \| \leq 1$, the uniform bounds on $v$ and $w$ are obtained; $v \leq \| v_0 \|$ holds with $\| v_0 \| \geq \overline v$, and $w \leq \| w_0 \|$ holds with $\| w_0 \| \geq \overline w$.
}\end{remark}

In what follows, we will give the a priori estimate for $\| \partial_i w (t) \|$, which may grow in $t$. As seen in Proposition~\ref{ub}, we prove that $0 \leq u, v, w \leq N$ as long as the classical solutions exist, if $N$ is chosen as
\[
  N := \max \big\{ 1, \kappa_0, \overline v, \widetilde v, \| v_0 \|,\overline w, \widetilde w, \| w_0 \| \big\} \quad {\text{with}} \quad \kappa_0 := \| u_0 \|.
\]

%
%

\begin{proposition}\label{d_iw}
If $0 \leq u, v, w \leq N$ for $x \in {\mathbb R}^n$ and $t \in [0, T]$ with $N$ and $T$, then there exists a $C > 0$ $($independent of $N$ and $T)$ such that
\[
  \| \partial_i w (t) \| \leq \| \partial_i w_ 0 \| + C (N^4 + N) \left( t^{1/2} + t^{3/2} \right), \quad t \in [0, T], \,\,\, 1 \leq i \leq n.
\]
\end{proposition}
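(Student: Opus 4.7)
The plan is to differentiate the Duhamel formula (\ref{int-w}) for $w$ in $x_i$ and reduce the estimate to a priori bounds on $\|\partial_i u(s)\|$ and $\|\partial_i v(s)\|$, which in turn are derived from (\ref{int-u}) and (\ref{int-v}) using the heat kernel estimate $\|\partial_i e^{s\Delta}f\|_\infty \le Cs^{-1/2}\|f\|_\infty$. Applying $\partial_i$ to (\ref{int-w}) gives, exactly as in the $\ell=2$ step of the proof of Proposition~\ref{tlwp},
$$
\partial_i w(t) = e^{-\rho t}\partial_i w_0 + \int_0^t e^{-\rho(t-s)}\left[\frac{\nu h(\partial_i u)v + \nu u(\partial_i v)(u+h)}{(u+h)^2} + \theta\,\partial_i v\right]\!(s)\,ds.
$$
Using $0\le u, v \le N$ together with $u+h\ge h$, the bracket is bounded pointwise by $CN\|\partial_i u(s)\| + C\|\partial_i v(s)\|$, and $|e^{-\rho(t-s)}|\le 1$.

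It therefore suffices to estimate $\|\partial_i u(s)\|$ and $\|\partial_i v(s)\|$. Differentiating (\ref{int-u}) and applying the heat kernel estimate gives
$$
\|\partial_i u(s)\| \le Cs^{-1/2}\|u_0\| + C\int_0^s (s-\sigma)^{-1/2}\bigl\|(1-u)u - \gamma uv/(u+h)\bigr\|\,d\sigma.
$$
Since $0\le u, v\le N$, the nonlinearity is bounded by $N+N^2+\gamma N \le C(N^2+N)$, and $\|u_0\|\le N$, so
$$
\|\partial_i u(s)\| \le CNs^{-1/2} + C(N^2+N)s^{1/2}.
$$
The same argument applied to (\ref{int-v}), bounding $|\mu uv/(u+h)+\alpha w-(m+v)v|\le C(N^2+N)$ via $w\le N$, yields the identical form for $\|\partial_i v(s)\|$.

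Substituting these into the bound for $\partial_i w$ and using $\int_0^t s^{-1/2}\,ds = 2t^{1/2}$ and $\int_0^t s^{1/2}\,ds = \frac{2}{3}t^{3/2}$, one obtains
$$
\|\partial_i w(t)\| \le \|\partial_i w_0\| + C(N^2+N)t^{1/2} + C(N^3+N^2+N)t^{3/2}.
$$
Since $N\ge 1$ by its very definition, $N^2+N\le 2(N^4+N)$ and $N^3+N^2+N\le 3(N^4+N)$, which gives the asserted bound.

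There is no serious obstacle here: the argument is routine Duhamel bookkeeping once the uniform bound $0\le u, v, w\le N$ is granted. The only point that has to be tracked carefully is the mild $s^{-1/2}$ singularity of $\partial_i u(s)$ and $\partial_i v(s)$ at $s=0$, coming from differentiating $e^{s\Delta}$ applied to data that lie only in $BUC$; this singularity is integrable, so it gets smoothed into an extra $t^{1/2}$ factor by the outer time integral and never propagates into $\partial_i w$ itself. This is precisely why $\|\partial_i w_0\|$ appears on the right-hand side without any singular correction.
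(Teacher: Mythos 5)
Your proof is correct and follows essentially the same route as the paper: differentiate the Duhamel formulas (\ref{int-u}), (\ref{int-v}) to get $\|\partial_i u(s)\|,\|\partial_i v(s)\|\leq C(N^2+N)(s^{-1/2}+s^{1/2})$, then insert these into the differentiated form of (\ref{int-w}) and integrate the $s^{\pm 1/2}$ singularity. Your bookkeeping of the powers of $N$ (ending with $N^3+N^2+N$ rather than the paper's cruder $N^4+N$) is a slightly sharper but equivalent form of the same estimate.
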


\begin{proof}
We first derive the estimate for $\partial_i u$. By (\ref{int-u}), we have
\begin{align*}
  \| \partial_i u (t) \| & \leq \| u_0 \| t^{-1/2} + \int_0^t (t-s)^{-1/2} \left\| (1-u) u - \frac{\gamma u v}{u+h} \right\| ds \\
  & \leq C (N^2 + N) (t^{-1/2} + t^{1/2})
\end{align*}
for $t \in [0, T]$ and $1 \leq i \leq n$ with some $C$. Similarly, by (\ref{int-v}), we seek
\begin{align*}
  & \| \partial_i v (t) \| \\
  & \quad \leq \| v_0 \| (d t)^{-1/2} + \int_0^t (dt-ds)^{-1/2} \left\| \frac{\mu u v}{u + h} + \alpha w - (m + v) v \right\| ds \\
  & \quad \leq C (N^2 + N) (t^{-1/2} + t^{1/2})
\end{align*}
with some $C$. Finally, by (\ref{int-w}) and estimates above, it turns out that
\begin{align*}
  \| \partial_i w (t) \| & \leq \| \partial_i w_0 \| + \int_0^t \left\| \frac{\nu h (\partial_i u) v + \nu u (\partial_i v) (u + h)}{(u + h)^2} + \theta \partial_i v \right\| ds \\
  & \leq \| \partial_i w_0 \| + C (N^4 + N) \int_0^t (s^{-1/2} + s^{1/2}) ds \\
  & \leq \| \partial_i w_0 \| + C (N^4 + N) (t^{1/2} + t^{3/2})
\end{align*}
for $t \in [0, T]$ and $1 \leq i \leq n$ with some positive constant $C$ depending on parameters, however, independent of $N$ and $T$.
\end{proof}

Note here that Theorem~\ref{th} follows from Proposition~\ref{ub}, Proposition~\ref{d_iw} and the estimate $T_0 \geq C/(M^4+1)$ in Proposition~\ref{tlwp}, since we can extend the obtained unique solutions time-globally, repeating construction.

%
%

\section{Invariant regions}

\noindent This section will be devoted to observe invariant regions. The proof of Theorem~\ref{th2}-(i) is easy, since $\big( 1, 0, 0 \big)$ is only one stable constant state. So, we skip it in here.

We are now position to give a proof of Theorem~\ref{th2}-(ii). The key step is to deduce a priori bounds of solutions, due to the maximum principle and comparison with solutions to the system of corresponding ordinary differential equations (\ref{kappa}) and (\ref{omega}). Let us recall the assumption:
\[
  \overline v := \mu/(1+h) + \alpha (\nu + \theta + \theta h) / (\rho + \rho h) - m > 0,
\]
$\overline w := (\nu + \theta + \theta h) \overline v / (\rho + \rho h) > 0$ and $R := [ 0, 1 ] \times [ 0 , \overline v ] \times [ 0, \overline w ]$.

%
%

\begin{proof}[Proof of Theorem~{\rm{\ref{th2}-(ii)}}]
We first show that $R$ is an invariant region. Let $\big( u_0, v_0, w_0 \big) \in R$. By construction of time-local solutions in Proposition~\ref{tlwp}, the non-negativity of solutions is clarified. Note that $\big( 0, 0, 0 \big)$ and $\big( 1, 0, 0 \big)$ are classical solutions in $R$. If $u_0 \equiv 0$, then $u \equiv 0$, in addition, $v \in [0, \overline v]$ and $w \in [0, \overline w]$, since $v^\flat := \alpha \theta / \rho - m \leq \overline v$ and $w^\flat := \theta (\alpha \theta - m \rho) / \rho^2 \leq \overline w$. Also, it is easy to see that $v \equiv 0$ and $w \equiv 0$ hold for $t > 0$, provided if $v_0 \equiv 0$ and $w_0 \equiv 0$.

Let $u_0 \not\equiv 0$ and either $v_0 \not\equiv 0$ or $w_0 \not\equiv 0$. As seen in Remark~\ref{r2}-(iii), it is clear that the classical solutions $u$, $v$, $w$ never touch to $0$, as long as they exist. Moreover, with $u_0 \leq 1$, we observe that $u(\tau) < 1$ for small $\tau > 0$ by the strong maximum principle. Similarly, it turns out that $v(\tau) < \overline v$ by $v_0 \leq \overline v$, as well as $w(\tau) < \overline w$. So, regarding $\tau$ as the initial time, one may assume $\big( u_0, v_0, w_0 \big) \in R^\circ := R \setminus \partial R$, without loss of generality.

Put $(\hat x, \hat t) \in {\mathbb R}^n \times (0, T_0]$ such that $\hat t$ is the first time when $u$ touches to $1$ at $\hat x$. We may assume $|\hat x| < \infty$ by Oleinik's argument; see e.g. \cite{GG99}. Since $u (\hat x, \hat t) = 1$ is the local maximum, at $(\hat x, \hat t)$ we see that $\partial_t u \geq 0$, $\Delta u \leq 0$, $(1 - u) u = 0$ and $-\gamma uv/(u+h) < 0$ by $v > 0$. This contradicts to the fact that $u$ is a solution to (P). Hence, $u$ never touches to $1$.

The same argument works on $v$. Indeed, let $0 < u < 1$, $0 < w < \overline w$, and let $(\check x, \check t) \in {\mathbb R}^n \times (0, T_0]$ such that $\check t$ is the first time when $v$ touches to $\overline v$ at $\check x$. So, at $(\check x, \check t)$, we see that $\partial_t v \geq 0$, $d \Delta v \leq 0$ and
\[
  \frac{\mu u v}{u+h} + \alpha w - (m + v) v < \frac{\mu \overline v}{1+h}  + \alpha \overline w - (m + \overline v) \overline v = 0.
\]
So, $v$ never touches to $\overline v$. As the same as above, one may confirm that $w$ never touches to $\overline w$ as long as classical solutions exist. This means that a triplet of the solutions always remains in $R^\circ \subset R$.

Next, we show the asymptotic behavior of solutions, briefly. Even if $\| u_0 \| > 1$, by $u (x, t) \leq \kappa (t)$, then there exists a $T_\varepsilon^\ast > 0$ such that $\| u (t) \| < 1 + \varepsilon$ for $t > T_\varepsilon^\ast$. From this and (\ref{omega}), there exists $T_\varepsilon^\sharp > T_\varepsilon^\ast$ such that $\| v (t) \| < \overline v + \varepsilon$ for $t > T_\varepsilon^\sharp$. Finally, one can also show that there exists $T_\varepsilon > T_\varepsilon^\sharp$ such that $\| w (t) \| < \overline w + \varepsilon$ for $t > T_\varepsilon$, by similar way. This completes the proof of Theorem~\ref{th2}-(ii).
\end{proof}

The proof of Theorem~\ref{th2}-(iii) is essentially similar to above. So, we omit it in here.

%
%

\begin{remark}\label{r4}{\rm
The stability of non-trivial constant states to the system of corresponding ordinary differential equations can be obtained, using linear algebra. For example, if we choose
\[
  \mu = \nu = \frac{\gamma}{2}, \, m = \theta = 0, \, \alpha = \rho = \frac{1}{4}, \, \gamma = h + \frac{1}{2}, \leqno{\rm{(E1)}}
\]
then a constant state $\big( u, v, w \big) = \big( 1/2, 1/2, 1/2 \big)$ is linearly stable for any $h > 0$. On the other hand, if we select the parameters as
\[
  \mu = \frac{3 \gamma}{4}, \, \nu = \frac{\gamma}{2}, \, m = \frac{\theta}{2} = \frac{1}{8}, \, \alpha = \frac{\rho}{2} = \frac{1}{4}, \, \gamma = h + \frac{1}{2}, \leqno{\rm{(E2)}}
\]
then $\big( 1/2, 1/2, 1/2 \big)$ is again a constant state whose stability is bifurcated in $h$. Indeed, $\big( 1/2, 1/2, 1/2 \big)$ is linearly instable for $h \in (0, 1/2)$, however, linearly stable for $h > 1/2$. The authors believe that such stability is still valid for solutions to (P). For studying Turing instability, it is needed to consider more complicated situation, e.g. when $\mu$ and $\nu$ are functions of $u$.
}\end{remark}

%
%


\begin{thebibliography}{aa}
\bibitem{GG99} M.-H.~Giga and Y.~Giga, {\em Nonlinear Partial Differential Equationsm}. (in Japanese) Ky{\=o}ritsu Shuppan, 1999. Expanded version in English, M.-H.~Giga, Y.~Giga and J.~Saal, {\em Nonlinear Partial Differential Equations; Asymptotic Behavior of Solutions and Self-Similar Solutions}. Progress in Nonlinear Differential Equations and their Applications, \textbf{79}, Birkh{\"a}user, Boston, 2010.

\bibitem{GIM99} Y.~Giga, K.~Inui and S.~Matsui, {\em On the Cauchy problem for the Navier-Stokes equations with nondecaying initial data}. Advances in fluid dynamics, 27-68, Quad. Mat. {\bf 4}, Dept. Math., Seconda Univ. Napoli, Caserta, (1999).

\bibitem{KNST19} S.~Kondo, Novrianti, O.~Sawada and N.~Tsuge, {\em A well-posedness for the reaction diffusion equations of Belousov-Zhabotinsky reaction}. preprint.

\bibitem{K15} M.~Kuwamura, {\em Turing instabilities in prey-predator systems with dormancy of predators}. J. Math. Biol. {\bf 71} (2015), no. 1, 125-149.

\bibitem{KNO09} M.~Kuwamura, T.~Nakazawa, T.~Ogawa, {\em A minimum model of prey-predator system with dormancy of predators and the paradox of enrichment}. J. Math. Biol. {\bf 58} (2009), no. 3, 459-479.

\bibitem{Tanabe79} H.~Tanabe, {\em Equations of evolution}. Translated from the Japanese by N.~Mugibayashi and H.~Haneda. Monographs and Studies in Mathematics, {\bf 6}. Pitman (Advanced Publishing Program), Boston, Mass.-London, 1979.
\end{thebibliography}
\end{document}